\documentclass [12pt, letterpaper, oneside] {amsart}

\pdfpagewidth 8.5in
\pdfpageheight 11in

\setlength \textwidth {6in}
\setlength \textheight {8in}
\setlength \topmargin {0in}
\setlength \oddsidemargin {0.25in}
\usepackage {amsmath, amsfonts, amssymb, amscd, setspace, hyperref, color, xy, multirow}
\xyoption{all}

\newcommand \onto {\twoheadrightarrow}


\newcommand \tr {\operatorname {tr}}

\newcommand \Par {\operatorname {Par}}
\newcommand \Tab {\operatorname {Tab}}
\newcommand \Std {\operatorname {Std}}
\newcommand \End {\operatorname {End}}

\newcommand \e {\varepsilon}
\renewcommand \l {\lambda}
\newcommand \s {\sigma}


\newcommand \mc {\mathcal}
\newcommand \mB {\mathbb}
\newcommand \mb {\mathbf}
\newcommand \mt {\mathtt}


\theoremstyle {plain}
\newtheorem {thm} [subsection] {Theorem}

\newtheorem {lemma} [subsection] {Lemma}

\title {A State-sum Formula for the Alexander Polynomial}
\date {\today}

\author {Samson Black}
\address {Department of Mathematics\\ 
University of Oregon\\ 
Eugene, OR 97403}
\email {sblack1@uoregon.edu}

\keywords {Alexander polynomial, state-sum, Markov trace}
\subjclass [2000] {Primary: 57M27; Secondary: 57M25}

\begin{document}

\begin {abstract}
We develop a diagrammatic formalism for calculating the Alexander polynomial of the closure of a braid as a state-sum.  Our main tools are the Markov trace formulas for the HOMFLY-PT polynomial and Young's semi-normal representations of the Iwahori-Hecke algebras of type A.
\end {abstract}

\maketitle

\section {Introduction}

In \cite {Jones}, Jones gave a construction of the two-variable HOMFLY-PT polynomial invariant using a recursively defined Markov trace on 
certain representations of the braid group.  These representations all factor through the Iwahori-Hecke algebra $\mc{H}_n$ (of type A), which enjoys a character theory \cite {Geck} deforming that of the symmetric group.  The Markov trace can be decomposed as a linear combination of irreducible characters $\chi_\l$ of the Hecke algebra:
\begin {equation}\label{jones}
\tau = \sum_{\l \vdash n} \omega_\l(q,z) \chi_\l.
\end {equation}
The coefficients $\omega_\l(q,z)$ in this ``Fourier expansion'' were calculated by Ocneanu using Schur functions (see, e.g., \cite {GeckJacon}).  By summing only the components corresponding to hook partitions, and specializing $z \to q^{-1}$, the Alexander polynomial $\Delta(L)$ of a link $L$ is recovered.
In this paper, we use (\ref{jones}) to develop a diagrammatic formalism for calculating $\Delta(L)$ as a state-sum derived from a braid presentation of the link $L$.  

The rest of the paper is organized as follows.
In Section \ref {sec s-n}, we review some of the representation theory of the Hecke algebras of type A, using the Hecke algebra analogue of Young's semi-normal form from \cite {Hoefsmit}.  We introduce some combinatorics specific to tableaux of hook shape, and recast the formulas in this language.
In Section \ref {sec constr}, we describe the construction of the state-sum and derive the main theorem (Theorem~\ref {main thm}).  An example is calculated explicitly in Section \ref {sec ex}.  

It is also possible to give a direct proof of our main theorem by verifying that our formulae give a link invariant satisfying the Conway skein relations; see \cite {Black}.  In subsequent work, I hope to generalize these results to colored braids and the multivariable Alexander polynomial.

\emph {Acknowledgements.}  I thank my advisor Arkady Vaintrob for suggesting this work and Jonathan Brundan for showing me that an analogue of Young's semi-normal form exists for Hecke algebras.  I would also like to thank Victor Ostrik for many interesting conversations along the way.

\section {Semi-normal representations of Hecke algebras} \label {sec s-n}

Much of the following is standard (see e.g. \cite {CurtisReiner} or 
\cite [chapters 4 and 5] {KasTur}).  We collect some of the definitions here and fix some notation.

\subsection {Braid group}
Fix $n \geq 1$. Let $\Sigma_n = \{ \s_1, \ldots, \s_{n-1} \} $ be the set of braid generators, and let $\Sigma_n^\bullet$ denote the set of words in the symbols $\Sigma_n \sqcup \Sigma_n^{-1}$.
Let $B_n$ be the braid group on $n$ strands, that is, the quotient of the free group on $\Sigma_n$ by the relations
\begin {eqnarray*}
    \s_r \s_{r+1} \s_r &=& \s_{r+1} \s_r \s_{r+1} \text { for each } 1 \le r \le n-2, \\
    \s_r \s_s &=& \s_s \s_r \text { when } |r - s| > 1.
\end {eqnarray*}
Let $\mc{L}$ be the set of isotopy classes of smooth links in $S^3$.  For each 
$n$, there is a canonical map
$\Sigma_n^\bullet \onto B_n$. Also there is a map
$B_n \rightarrow \mc{L}$ defined by closing a braid into a link.
The map $B_n \to \mB{Z}$, $\s_r \mapsto 1$ induces an isomorphism of groups $B_n / [B_n, B_n] \to \mB{Z}$.  The image of $w \in \Sigma_n^\bullet$ under the composition $\Sigma_n^\bullet \onto B_n \to \mB{Z}$ is called the \emph {exponent sum} of $w$. 

\subsection {Iwahori-Hecke algebra}
Let $\mc{H}_n$ denote the \emph {Iwahori-Hecke algebra} associated to $S_n$.  This is the algebra over $\mB{C}(v)$ generated by $H_1, \ldots, H_{n-1}$, 
subject to the braid relations
\begin {eqnarray*}
    H_r H_{r+1} H_r &=& H_{r+1} H_r H_{r+1} \quad \text { for each } 1 \le r \le n-2, \\
    H_r H_s &=& H_s H_r \quad \text { when } |r - s| > 1,
\end {eqnarray*}
and also the quadratic relations
\begin {equation*}
    (H_r - v)(H_r + v^{-1}) = 0.
\end {equation*}
By setting $q=v^2$ and $T_r = v H_r$ for each $r$, the braid relations look the same in the $T$ variables (they are homogeneous), and the quadratic relations become
\begin {equation*}
    (T_r - q)(T_r + 1) = 0.
\end {equation*}
Because $v$ is generic, it is well known that
$\mc{H}_n$ is a semisimple algebra and its representation theory is equivalent to that of the symmetric group $S_n$ over the field $\mB{C}(v)$.

In what follows, $[r] \in \mB{Z}[v,v^{-1}]$ denotes the quantum integer
\begin {equation}
[r] = \frac {v^r - v^{-r}} {v - v^{-1}} \label {q int}
\end {equation}
for any $r \in \mB{Z}$.

\subsection {Seminormal representations}
This exposition follows \cite[section 3]{Ram}, although the results were originally worked out in \cite {Hoefsmit}.  For a new point of view and substantial generalization, see \cite[section 5]{BK}.

Let $\Par(n)$ denote the set of all integer partitions of $n$.  To the partition $\l = (\l_1 \ge \l_2 \ge \cdots)$, associate its \emph {Young diagram}, that is, the left-justified diagram with $\l_1$ boxes on the first row, $\l_2$ boxes on the second row, etc.

Let $\Tab(\l)$ denote the set of $\l$-tableaux.  These are fillings of the boxes in the Young diagram $\l$ by the numbers $1, \dots, n$.  Let $\Std(\l)$ denote the set of \emph {standard} $\l$-tableaux, namely those that increase across rows and down columns.  The symmetric group $S_n$ acts on $\Tab(\l)$ via its natural action on the entries, although $\Std(\l)$ is not stable under this action.
For a tableau $\mt{T} \in \Tab(\l)$, its {\em residue sequence}
$(i_1,\dots,i_n) \in \mB{Z}^n$ 
is defined by setting $i_r = b-a$ where the box labeled
$r$ in $\mt{T}$ appears in row $a$ and column $b$.

\begin {figure} [h]
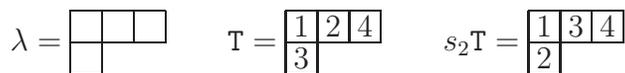

\[
\l = \vcenter {\xy 0;/r.10pc/:
(0,0);(30,0)**@{-},
(0,-10);(30,-10)**@{-},
(0,-20);(10,-20)**@{-},
(0,0);(0,-20)**@{-},
(10,0);(10,-20)**@{-},
(20,0);(20,-10)**@{-},
(30,0);(30,-10)**@{-}
\endxy}
\qquad
\mt{T} = \vcenter {\xy 0;/r.10pc/:
(0,0);(30,0)**@{-},
(0,-10);(30,-10)**@{-},
(0,-20);(10,-20)**@{-},
(0,0);(0,-20)**@{-},
(10,0);(10,-20)**@{-},
(20,0);(20,-10)**@{-},
(30,0);(30,-10)**@{-},
(5,-5)*{1},
(15,-5)*{2},
(5,-15)*{3},
(25,-5)*{4}
\endxy}
\qquad
s_2 \mt{T} = \vcenter {\xy 0;/r.10pc/:
(0,0);(30,0)**@{-},
(0,-10);(30,-10)**@{-},
(0,-20);(10,-20)**@{-},
(0,0);(0,-20)**@{-},
(10,0);(10,-20)**@{-},
(20,0);(20,-10)**@{-},
(30,0);(30,-10)**@{-},
(5,-5)*{1},
(15,-5)*{3},
(5,-15)*{2},
(25,-5)*{4}
\endxy}
\]
\caption {The partition $\l = (3,1) \in \Par(4)$ and standard tableaux $\mt{T}$ and $s_2\mt{T}$.  Here, $\mt{T}$ has residue sequence $(0,1,-1,2)$.}
\end {figure}

Fix a partition $\l \in \Par(n)$ and let $S(\l)$ be the $\mB{C}(v)$-vector space on basis \{$x_{\mt{T}} \;|\; \mt{T} \in \Std(\l)\}$.  Let $(i_1, \ldots, i_n)$ be the residue sequence of $\mt{T}$ and define 
$a_r(\mt{T}), b_r(\mt{T}) \in \mB{C}(v)$ to be
\begin {equation}\label{abs}
a_r(\mt{T}) = \frac {v - v^{-1}} {1 - v^{2(i_r-i_{r+1})}},\qquad
b_r(\mt{T}) = v^{-1}+a_r(\mt{T}).
\end {equation}
Define actions of the generators $H_1, \ldots, H_{n-1}$ of $\mc{H}_n$ on $S(\l)$ by
\begin {equation} \label {s-n formula}
H_r x_{\mt{T}} = a_r (\mt{T})x_{\mt{T}} + b_r (\mt{T}) x_{s_r\mt{T}},
\end {equation}
where we interpret $x_{s_r \mt{T}} = 0$ if $s_r \mt{T}$ is not a standard tableau.

\begin {thm} [Semi-normal representations] \label {irrep thm}
This action extends to make $S(\l)$ into a well-defined $\mc{H}_n$-module.  Furthermore, the modules $\{S(\l) \; | \; \l \in \Par(n) \}$ constitute a complete set of pairwise non-isomorphic irreducible modules for $\mc{H}_n$.
\end {thm}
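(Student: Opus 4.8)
The plan is to treat the two assertions separately: first that (\ref{s-n formula}) really defines an $\mc{H}_n$-action on $S(\l)$, and then that the resulting modules are irreducible, pairwise non-isomorphic, and complete. To establish the module structure I would verify the three families of defining relations of $\mc{H}_n$. Set $q = v^2$ and $d = i_r - i_{r+1}$. Since $s_r$ interchanges the boxes holding $r$ and $r+1$ it swaps $i_r$ and $i_{r+1}$, so $a_r(s_r\mt{T})$ is obtained from $a_r(\mt{T})$ by $d \mapsto -d$; a one-line computation then yields the key identity $a_r(\mt{T}) + a_r(s_r\mt{T}) = v - v^{-1}$. A preliminary combinatorial remark is that $r$ and $r+1$ never lie on a common diagonal of a standard tableau (diagonal-convexity of Young diagrams would otherwise force an entry strictly between them), so $d \neq 0$ and every $a_r(\mt{T})$ is well defined. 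Granting this, the quadratic relation $(H_r - v)(H_r + v^{-1}) = 0$ reduces to a $2 \times 2$ computation on the span of $x_{\mt{T}}$ and $x_{s_r\mt{T}}$ that collapses via the key identity; in the degenerate case, where $r,r+1$ share a row or column and $s_r\mt{T}$ is non-standard, one checks directly that $a_r(\mt{T}) = v$ or $a_r(\mt{T}) = -v^{-1}$, so $x_{\mt{T}}$ is already an eigenvector and the relation holds. The far-commutation $H_r H_s = H_s H_r$ for $|r-s| > 1$ is immediate, since the two operators move disjoint pairs of entries.

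The braid relation $H_r H_{r+1} H_r = H_{r+1} H_r H_{r+1}$ is the main obstacle. It involves the three entries $r, r+1, r+2$, on whose positions the subgroup $\langle s_r, s_{r+1} \rangle \cong S_3$ acts. I would restrict to the span of a single $S_3$-orbit of tableaux (dimension at most six) and verify the identity there, organizing the work by the pairwise content differences of the three boxes, equivalently by how many of the six rearrangements are standard. Each case should collapse under repeated use of the identity $a_r(\mt{T}) + a_r(s_r\mt{T}) = v - v^{-1}$ and its analogue for $s_{r+1}$. This bookkeeping is the heaviest part of the argument; once it is done every defining relation has been checked, since each involves at most three consecutive generators.

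For the second assertion, first observe that the $x_{\mt{T}}$ simultaneously diagonalize the commuting family of Jucys--Murphy operators, with joint eigenvalues recording the residue sequence, and that a standard tableau of fixed shape is determined by its residue sequence (the addable corners of a diagram occupy distinct diagonals, so each insertion step is forced); hence any submodule of $S(\l)$ is the span of a subset of the $x_{\mt{T}}$. To prove irreducibility, note that whenever $\mt{T}$ and $s_r\mt{T}$ are both standard the operator $H_r$ genuinely mixes $x_{\mt{T}}$ and $x_{s_r\mt{T}}$: the off-diagonal coefficient $b_r(\mt{T})$ vanishes only when $d = 1$, and then the reverse coefficient equals $v + v^{-1} \neq 0$. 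Since the graph on $\Std(\l)$ with edges $\{\mt{T}, s_r\mt{T}\}$ is connected, the only nonzero invariant coordinate subspace is all of $S(\l)$. For pairwise non-isomorphism, observe that $H_1, \dots, H_{n-2}$ involve only the entries $1, \dots, n-1$, so deleting the box containing $n$ exhibits the restriction of $S(\l)$ to $\mc{H}_{n-1}$ as $\bigoplus_\mu S(\mu)$, summed over the partitions $\mu$ obtained by removing one box of $\l$; this branching is multiplicity-free, so by induction on $n$ it recovers the set of such $\mu$, hence $\l$ itself, whence $S(\l) \not\cong S(\mu)$ for $\l \neq \mu$. Finally, completeness follows from semisimplicity together with the dimension count $\sum_{\l \vdash n} (\dim S(\l))^2 = \sum_{\l \vdash n} (f^\l)^2 = n! = \dim_{\mB{C}(v)} \mc{H}_n$ via Artin--Wedderburn.
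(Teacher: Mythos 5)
The paper does not prove this theorem at all: it is quoted from the literature, with the statement attributed to Hoefsmit and the exposition to Ram, so your proposal is not competing with an argument in the text but reconstructing the classical one. Your outline is essentially the standard proof and is sound in structure: the identity $a_r(\mt{T}) + a_r(s_r\mt{T}) = v - v^{-1}$ is correct and does drive the quadratic relation; the observation that $r$ and $r+1$ never share a diagonal (so $a_r$ is always defined) is right; the degenerate eigenvalues $v$ and $-v^{-1}$ for same-row and same-column pairs check out; and the endgame (invariant subspaces are coordinate subspaces via the Jucys--Murphy spectrum, connectivity of the $s_r$-graph on $\Std(\l)$, multiplicity-free branching for non-isomorphism, and the dimension count $\sum_\l (f^\l)^2 = n! = \dim \mc{H}_n$ with semisimplicity for completeness) is the standard and correct route. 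The two places where you are writing a promissory note rather than a proof are the braid relation, which you correctly identify as the heavy case analysis and defer entirely, and the simultaneous diagonalization of the Jucys--Murphy elements with eigenvalues recording the residue sequence, which is itself a nontrivial induction from the seminormal formulas and should be proved, not just invoked.

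One local slip worth fixing: in the irreducibility step you argue that $H_r$ ``genuinely mixes'' $x_{\mt{T}}$ and $x_{s_r\mt{T}}$ because $b_r(\mt{T})$ vanishes only when $d=1$, ``and then the reverse coefficient is $v+v^{-1} \neq 0$.'' As stated this does not suffice: if only the reverse coefficient were nonzero, the span of $x_{\mt{T}}$ alone would still be $H_r$-stable, and the connectivity argument needs the implication in the direction $x_{\mt{T}} \in M \Rightarrow x_{s_r\mt{T}} \in M$, i.e.\ it needs $b_r(\mt{T}) \neq 0$ itself. The correct observation is that when both $\mt{T}$ and $s_r\mt{T}$ are standard the boxes of $r$ and $r+1$ are strictly northeast/southwest of one another, forcing $|d| \ge 2$, so for generic $v$ \emph{both} off-diagonal coefficients are nonzero and the case $d=1$ never occurs on an edge of the graph. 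With that repair, and the braid-relation computation actually carried out, your argument is a complete and self-contained proof of a statement the paper simply cites.
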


\subsection {Sign sequences and hook partitions}
For $0 \le \ell \le n-1$, let $\l_\ell$ be the hook partition $(n-\ell, 1^\ell)$.  We refer to $\ell$ as \emph {leg length}.

\begin {lemma} \label {sign seq}
Standard tableaux of shape $\l_\ell$ are in bijection 
with sign sequences $\e =(\e_1,\dots,\e_n)\in \{\pm\}^n$ such that $\e_1 = +$ and
$\ell$ entries equal $-$.
\end {lemma}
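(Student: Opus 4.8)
The plan is to exhibit the bijection explicitly and verify it in both directions. First I would analyze the geometry of the hook $\l_\ell = (n-\ell, 1^\ell)$: its Young diagram consists of a corner box at position $(1,1)$, an \emph{arm} of $n-\ell-1$ boxes filling out the rest of the first row, and a \emph{leg} of $\ell$ boxes continuing down the first column. Since every box lies in this (unique) long row or this (unique) long column, and the standard condition forces entries to increase along the row and down the column, I expect that a standard filling is completely determined by the partition of $\{1,\dots,n\}$ into ``arm values'' and ``leg values.''

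Concretely, I would first observe that the minimal entry $1$ must occupy the corner $(1,1)$, since that box precedes every other box both in its row and in its column. Next, given any subset $S \subseteq \{2,\dots,n\}$ with $|S| = \ell$ designated as the leg entries, the remaining entries fill the arm, and the increasing-row and increasing-column requirements leave no further freedom; conversely they force every standard tableau to arise in exactly this way. This reduces Lemma~\ref{sign seq} to a bijection between such subsets $S$ and the prescribed sign sequences.

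The map itself: to a standard tableau $\mt{T} \in \Std(\l_\ell)$ I assign $\e = (\e_1,\dots,\e_n)$ by setting $\e_r = +$ when $r$ lies in the first row of $\mt{T}$ and $\e_r = -$ when $r$ lies in the first column below the corner. Because $1$ sits in the corner, $\e_1 = +$ automatically, and the number of minus signs equals the leg length $\ell$, so $\e$ lies in the target set. For the inverse, given such an $\e$ I place the indices $r$ with $\e_r = +$ into the first row in increasing order and those with $\e_r = -$ into the first column in increasing order, the condition $\e_1 = +$ guaranteeing that $1$ lands in the corner.

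The only substantive point—and the step I expect to require the most care—is verifying that this inverse construction really yields a \emph{standard} tableau and that the two maps are mutually inverse. For standardness one checks that arranging the arm values and the leg values in increasing order makes the row and column strictly increasing, using that $1$ is the least value and occupies the shared corner. Mutual inversion is then immediate, since each map only records, for every value $r$, whether $r$ sits in the first row or the first column. I would close by confirming the count as a sanity check: there are $\binom{n-1}{\ell}$ such sign sequences, matching the $\binom{n-1}{\ell}$ ways to choose the $\ell$ leg entries from $\{2,\dots,n\}$.
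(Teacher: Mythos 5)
Your proposal is correct and follows essentially the same route as the paper: the same forward map (sign $+$ for first-row entries, $-$ for leg entries) and the same inverse, which you describe by sorting the $+$ and $-$ indices into the arm and leg while the paper phrases it as a recursive placement of $2,\dots,n$. The extra counting check via $\binom{n-1}{\ell}$ is a harmless addition.
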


\begin {proof}
Beginning with a standard $\l_\ell$-tableau, define $\e = (\e_1, \ldots, \e_n)$ by
\begin {equation}
\e_r = \left\{ \begin {array} {l l}
+, \qquad & \text {if } r \text { appears on the first row} \\
-, \qquad & \text {otherwise}
\end {array} \right.
\end {equation}
Notice that the box labeled 1 has to be in the corner of the hook, so $\e_1 = +$.  Also, $\ell$ numbers are on the leg of the hook, so there are 
$\ell$ entries equal to $-$.

For the inverse, starting with a sign sequence $\e = (\e_1, \ldots, \e_n)$ with $\e_1=+$ and $\ell$ other entries equal to $-$, construct a standard tableau recursively, as follows.  Place 1 in the corner of the diagram.  Now, for each $r>1$, suppose that the numbers $1, \ldots, r-1$ have been placed.  Either add $r$ to the end of the first row or at the bottom of the first column, according to whether $\e_r$ is $+$ or $-$, respectively.
\end {proof}

Using this bijection, we can adapt the semi-normal representation to the combinatorics of sign sequences.

\begin{thm}\label{signs irrep thm}
The irreducible module $S(\l_\ell)$ has basis $\{x_{\e}\}$, where $\e$ 
runs over sign sequences having $\e_1=+$ and $\ell$ other entries equal to $-$.  The generators $H_1, \ldots, H_{n-1}$ of $\mc{H}_n$ act by
\begin {equation} \label {signs s-n formula}
H_r x_\e = a_r(\e) x_\e + b_r(\e) x_{s_r\e}
\end {equation}
where $s_r \e$ denotes the sign sequence obtained from $\e$ by 
permuting $\e_r$ and $\e_{r+1}$, $x_\e$ is interpreted as zero if $\e_1 = -$, and
\begin {align} \label {diag coeff}
a_r(\e) &= \left\{ \begin {array} {l l}
v &\qquad \text {if } (\e_r, \e_{r+1}) = (+, +) \\
-v^{-1} &\qquad \text {if } (\e_r, \e_{r+1}) = (-, -) \\
v^r / [r] &\qquad \text {if } (\e_r, \e_{r+1}) = (-, +) \\
-v^{-r} / [r] &\qquad \text {if } (\e_r, \e_{r+1}) = (+, -),
\end {array} \right.\\
\label {off-diag coeff}
b_r(\e) &= \left\{ \begin {array} {l l}
{[r+1] / [r]} &\qquad \text {if } (\e_r, \e_{r+1}) = (-, +) \\
{[r-1] / [r]} &\qquad \text {if } (\e_r, \e_{r+1}) = (+, -)\\
0&\qquad \text{otherwise.}
\end {array} \right.
\end {align}
The inverse generators $H_1^{-1}, \ldots, H_{n-1}^{-1}$ act by
\begin {equation} \label {newer}
H_r^{-1} x_\e = \bar a_r(\e) x_\e + b_r(\e) x_{s_r\e}
\end {equation}
where $\bar a_r(\e)$ is obtained from 
$a_r(\e)$ by replacing $v$ by $v^{-1}$.
\end{thm}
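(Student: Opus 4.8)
The plan is to obtain Theorem~\ref{signs irrep thm} as a direct translation of the general semi-normal formulas \eqref{abs}--\eqref{s-n formula} through the bijection of Lemma~\ref{sign seq}. The statement about the basis is immediate: the bijection identifies $\Std(\l_\ell)$ with the set of admissible sign sequences, so $\{x_\e\}$ is nothing but the basis $\{x_{\mt{T}}\}$ of Theorem~\ref{irrep thm} reindexed. The substantive content is the evaluation of the coefficients, and for this the only quantity entering \eqref{abs} is the residue difference $i_r - i_{r+1}$, which I would compute case by case from the sign pattern $(\e_r,\e_{r+1})$.

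Under the bijection a ``$+$'' in position $r$ places $r$ on the first row and a ``$-$'' places it on the leg, so with $p_r = \#\{j \le r : \e_j = +\}$ and $m_r = \#\{j \le r : \e_j = -\}$ one has $i_r = p_r - 1$ when $\e_r = +$ and $i_r = -m_r$ when $\e_r = -$. Using $p_r + m_r = r$, a short computation gives $i_r - i_{r+1} = -1,\,1,\,-r,\,r$ in the four cases $(+,+),(-,-),(-,+),(+,-)$ respectively. Substituting into $a_r = (v-v^{-1})/(1-v^{2(i_r-i_{r+1})})$ and simplifying (elementary in the first two cases, and using $v^r - v^{-r} = [r](v-v^{-1})$ from \eqref{q int} in the mixed ones) yields the four values $v,\ -v^{-1},\ v^r/[r],\ -v^{-r}/[r]$ of \eqref{diag coeff}. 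The off-diagonal coefficients then follow from $b_r = v^{-1}+a_r$: the two one-line identities $v^{-1} + v^r/[r] = [r+1]/[r]$ and $v^{-1} - v^{-r}/[r] = [r-1]/[r]$ give \eqref{off-diag coeff} in the mixed cases.

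The point requiring care -- and the main obstacle to a purely mechanical translation -- is reconciling the two different conventions under which an off-diagonal term is discarded. In the tableau formula \eqref{s-n formula} one sets $x_{s_r\mt{T}} = 0$ whenever $s_r\mt{T}$ fails to be standard, whereas in \eqref{signs s-n formula} the operation $s_r$ acts on sign sequences, where swapping two equal signs does nothing. When $\e_r = \e_{r+1}$ the entries $r$ and $r+1$ occupy consecutive boxes of the same row (two $+$'s) or the same column (two $-$'s), so $s_r\mt{T}$ is not standard and the tableau formula keeps only the diagonal term; but $s_r\e = \e$ as sign sequences, so to reproduce the same operator I must set $b_r = 0$ in these cases -- exactly the ``otherwise'' clause of \eqref{off-diag coeff}. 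The remaining degenerate case is $r=1$, where $\e_1 = +$ forces the pattern $(+,+)$ or $(+,-)$; in the latter the boxes labeled $1$ and $2$ lie in the same column, so $s_1\mt{T}$ is again non-standard. This is enforced automatically by the formula, since $b_1 = [0]/[1] = 0$, and is consistent with the convention that $x_\e = 0$ when $\e_1 = -$. Thus no separate treatment of $r=1$ is needed, and the sign formula agrees with the tableau formula in every case.

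Finally, for the inverse generators I would avoid recomputing from scratch and instead use the quadratic relation, which gives $H_r^{-1} = H_r - (v-v^{-1})$. Applying this to $x_\e$ leaves the off-diagonal coefficient $b_r(\e)$ untouched and replaces the diagonal coefficient by $a_r(\e) - (v-v^{-1})$. Since $[r]$ is invariant under $v \leftrightarrow v^{-1}$ and $v^r - v^{-r} = [r](v-v^{-1})$, one checks in each of the four cases that $a_r(\e) - (v-v^{-1})$ equals $a_r(\e)$ with $v$ replaced by $v^{-1}$; this is precisely $\bar a_r(\e)$, establishing \eqref{newer}.
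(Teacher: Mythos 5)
Your proof is correct and follows essentially the same route as the paper: translate Theorem~\ref{irrep thm} through the bijection of Lemma~\ref{sign seq}, compute the residue differences $i_r - i_{r+1}$ case by case from the sign pattern, substitute into (\ref{abs}), and derive (\ref{newer}) from $H_r^{-1} = H_r - (v - v^{-1})$. You supply more detail than the paper at the points it labels ``easily deduced''---in particular the reconciliation of the two conventions for discarding the off-diagonal term when $\e_r = \e_{r+1}$, and the observation that $b_1 = [0]/[1] = 0$ handles the $r=1$ degeneracy automatically---but the argument is the same.
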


\begin{proof}
This is just a translation of Theorem~\ref{irrep thm}
using the bijection from Lemma~\ref{sign seq}.
Given a sign sequence
$\e \in \{\pm\}^n$
having $\e_1=+$ and $\ell$ other entries equal to $-$, 
construct the corresponding standard tableau, and let $(i_1,\dots,i_n)$ be its residue sequence.
We have $i_1 = 0$, and for $1 \le r < n$,
\begin {equation} \label {sign to cont}
i_{r+1} = \left\{ \begin {array} {l l}
i_r + 1 &\qquad \text {if } (\e_r, \e_{r+1}) = (+, +) \\
i_r - 1 &\qquad \text {if } (\e_r, \e_{r+1}) = (-, -) \\
i_r + r &\qquad \text {if } (\e_r, \e_{r+1}) = (-, +) \\
i_r - r &\qquad \text {if } (\e_r, \e_{r+1}) = (+, -).
\end {array} \right.
\end {equation}
Given this, the formulae  (\ref{diag coeff})--(\ref{off-diag coeff}) are easily deduced from (\ref{abs}).
Finally the formula (\ref{newer}) is easily deduced from (\ref{signs s-n formula})
since $H_r^{-1} = H_r - (v-v^{-1})$.
\end{proof}

\section {Construction} \label {sec constr}

Begin with a word $w \in \Sigma_n^\bullet$ in the braid generators $\s_1, \ldots, \s_{n-1}$ and their inverses, which we picture as a diagram drawn up the page as the word is read from right to left.  
\[
\s_r = \; \vcenter{
\xy 0;/r.10pc/:
(0,15)*{},
(0,0);(0,10)**@{-},
(10,5)*{\cdots},
(20,0);(20,10)**@{-},
(30,0);(40,10)**@{-},
(40,0);(36,4)**@{-},
(34,6);(30,10)**@{-},
(30,-4)*{_{\phantom{+1} r \phantom {+1}}},
(40,-4)*{_{r+1}},
(50,0);(50,10)**@{-},
(60,5)*{\cdots},
(70,0);(70,10)**@{-}
\endxy
} \qquad \qquad
\s_r^{-1} = \; \vcenter{
\xy 0;/r.10pc/:
(0,15)*{},
(0,0);(0,10)**@{-},
(10,5)*{\cdots},
(20,0);(20,10)**@{-},
(30,0);(34,4)**@{-},
(36,6);(40,10)**@{-},
(40,0);(30,10)**@{-},
(30,-4)*{_{\phantom{+1} r \phantom {+1}}},
(40,-4)*{_{r+1}},
(50,0);(50,10)**@{-},
(60,5)*{\cdots},
(70,0);(70,10)**@{-}
\endxy
}
\]

Construct \emph {permutation diagrams} from the braid diagram by replacing each crossing by one of two resolutions:
\begin {eqnarray*}
\vcenter{
\xy
(0,0);(10,10)**@{-},
(10,0);(6,4)**@{-},
(4,6);(0,10)**@{-}
\endxy
} \qquad &\longmapsto& \qquad \vcenter{
\xy
(0,0);(0,10)**@{-},
(10,0);(10,10)**@{-}
\endxy
} \qquad \text{or} \qquad \vcenter{
\xy
(0,0);(10,10)**@{-},
(10,0);(0,10)**@{-}
\endxy
} \\ \\
\vcenter{
\xy
(0,0);(4,4)**@{-},
(6,6);(10,10)**@{-},
(10,0);(0,10)**@{-}
\endxy
} \qquad &\longmapsto& \qquad \vcenter{
\xy
(0,0);(0,10)**@{-},
(10,0);(10,10)**@{-}
\endxy
} \qquad \text{or} \qquad \vcenter{
\xy
(0,0);(10,10)**@{-},
(10,0);(0,10)**@{-}
\endxy
}
\end {eqnarray*}
A permutation diagram $\mb{x}$ is \emph {admissible} if
\begin {description}
\item [P1] the first (leftmost) strand goes straight through without crossing any other strands, and
\item [P2] the underlying permutation is the identity.
\end {description}
A \emph{state} is a pair $(\mb{x},\e)$, where $\mb{x}$ is an admissible permutation diagram and $\e$ is an assignment of a sign $\pm$ to each strand 
such that
\begin {description}
\item [S1] the first (leftmost) sign is $+$, and
\item [S2] no two strands of the same sign cross.
\end {description}
Let $\mc{S}(w)$ denote the set of states for $w$.  To a state $(\mb{x},\e) \in \mc{S}(w)$, we associate a weight $M(w, \mb{x}, \e) \in \mB{C}(v)$ defined by multiplying together
certain scalars, one for each resolved crossing.  The scalar associated to a positive crossing of strands in positions $r$ and $r+1$ is given in (\ref{scalars}).  For a negative crossing, replace $v$ by $v^{-1}$ in each expression.

\begin {equation} \label {scalars}
\vcenter{
\xy
(0,0);(10,10)**@{-},
(10,0);(6,4)**@{-},
(4,6);(0,10)**@{-},
(0,-2)*={_r},
(10,-2)*={_{r+1}},
\endxy
} \quad \longmapsto \quad \left \{
\begin {array} {r c r}
v \quad \vcenter{
\xy
(0,-2)*={_+},
(0,0);(0,10)**@{-},
(10,-2)*={_+},
(10,0);(10,10)**@{-}
\endxy
} & \qquad &
-v^{-1} \quad \vcenter{
\xy
(0,-2)*={_-},
(0,0);(0,10)**@{-},
(10,-2)*={_-},
(10,0);(10,10)**@{-}
\endxy
} \\ & & \\ & & \\
\dfrac {v^r} {[r]} \quad \vcenter{
\xy
(0,-2)*={_-},
(0,0);(0,10)**@{-},
(10,-2)*={_+},
(10,0);(10,10)**@{-}
\endxy
} & \qquad &
\dfrac {-v^{-r}} {[r]} \quad \vcenter{
\xy
(0,-2)*={_+},
(0,0);(0,10)**@{-},
(10,-2)*={_-},
(10,0);(10,10)**@{-}
\endxy
} \\ & & \\ & & \\
\dfrac {[r+1]} {[r]} \quad \vcenter{
\xy
(0,-2)*={_-},
(0,0);(10,10)**@{-},
(10,-2)*={_+},
(10,0);(0,10)**@{-}
\endxy
} & &
\dfrac {[r-1]} {[r]} \quad \vcenter{
\xy
(0,-2)*={_+},
(0,0);(10,10)**@{-},
(10,-2)*={_-},
(10,0);(0,10)**@{-}
\endxy
}
\end {array}
\right.
\end {equation} \\
Define $A(w) \in \mB{C}(v)$ by
\begin {equation} \label {state-sum}
A(w)=\frac 1 {[n]} \sum_{(\mb{x}, \e) \in \mc{S}(w)} \langle \e \rangle M(w,\mb{x},\e)
\end {equation}
where $\langle \e \rangle \in \{ \pm 1 \}$ is the product of the signs $\e_1, \ldots, \e_n$ attached to the strands.
\begin {thm} \label {main thm}
Let $L$ be an oriented link, and let $w \in \Sigma_n^\bullet$ 
represent a braid in $B_n$ whose closure is $L$.  Then, $A(w)$ is a polynomial in $v-v^{-1}$, and
\[ A(w) = \Delta(L), \] where $\Delta(L)$ 
is the Conway-normalized Alexander polynomial.
\end {thm}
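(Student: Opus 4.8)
The plan is to recognize the state-sum $A(w)$ as a signed combination of the irreducible characters $\chi_{\l_\ell}$ of the hook modules evaluated on $w$, and then to invoke the character expansion (\ref{jones}) of Jones's Markov trace, together with Ocneanu's weights, to identify this combination with the Alexander polynomial. The diagrammatic identification is the new content; the passage from the weighted character sum to $\Delta(L)$ is then essentially a recollection from the cited literature.

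First I would observe that the six scalars attached to a resolved positive crossing in (\ref{scalars}) are exactly the matrix coefficients $a_r(\e)$ and $b_r(\e)$ of the hook module $S(\l_\ell)$ recorded in Theorem~\ref{signs irrep thm}, and that replacing $v$ by $v^{-1}$ at a negative crossing reproduces the coefficients $\bar a_r(\e),b_r(\e)$ of (\ref{newer}) governing the action of $H_r^{-1}$. Consequently, multiplying the seminormal matrices of the letters of $w$ in order, the weight $M(w,\mb{x},\e)$ of a state is precisely one term in the expansion of a diagonal matrix entry of the representation $\rho_{\l_\ell}$ on $S(\l_\ell)$: the admissible diagram $\mb{x}$ records, crossing by crossing, whether the action takes its diagonal term (a straight resolution, coefficient $a_r$) or its off-diagonal term (a crossing resolution, coefficient $b_r$), while the sign assignment $\e$, carried rigidly along the strands, tracks the basis vector $x_\e$ present at each height.

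Next I would check that the constraints defining a state cut out exactly the nonzero diagonal contributions. Condition S2 is automatic, since $b_r(\e)=0$ whenever $\e_r=\e_{r+1}$, so a diagram in which two like-signed strands cross contributes zero; condition S1 records that $x_\e=0$ unless $\e_1=+$. For P1 and P2 the key point is that in the hook module the leftmost sign can never leave the value $+$, because a crossing resolution at $r=1$ either swaps two equal signs (forbidden) or produces a sequence beginning with $-$ (hence zero); thus the leftmost strand goes straight through, which is P1. For a diagonal entry the sign sequence must return to its starting value, and since like-signed strands never cross, each sign class preserves its left-to-right order; a permutation fixing both the positions of each sign and the order within each class is the identity, which is exactly P2. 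Assembling these observations gives, for each leg length $\ell$, the identity $\chi_{\l_\ell}(w)=\tr(\rho_{\l_\ell}(w))=\sum_{(\mb{x},\e)}M(w,\mb{x},\e)$ summed over states with exactly $\ell$ minus signs. Since $\langle\e\rangle=(-1)^\ell$ on such states, summing over $\ell$ yields
\[
\sum_{(\mb{x},\e)\in\mc{S}(w)}\langle\e\rangle\,M(w,\mb{x},\e)=\sum_{\ell=0}^{n-1}(-1)^\ell\chi_{\l_\ell}(w),
\qquad
A(w)=\frac{1}{[n]}\sum_{\ell=0}^{n-1}(-1)^\ell\chi_{\l_\ell}(w).
\]

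Finally I would identify this combination with the Alexander polynomial. By (\ref{jones}) the HOMFLY-PT Markov trace is $\tau=\sum_\l\omega_\l(q,z)\chi_\l$, and under the specialization $z\to q^{-1}$ Ocneanu's weights vanish on every non-hook shape while $\omega_{\l_\ell}$ reduces to $(-1)^\ell/[n]$, which I would verify from the explicit evaluation of the weight on hook shapes. Jones's theorem guarantees that the resulting combination is invariant under both Markov moves — conjugation-invariance is the trace property of each $\chi_\l$, and stabilization-invariance is built into the normalization by $[n]$ together with the alternating signs — so $A$ descends to an invariant of the closure $L$; comparing with the Conway skein relation and the value on the unknot pins down the normalization and simultaneously exhibits $A(w)$ as a polynomial in $v-v^{-1}$, giving $A(w)=\Delta(L)$. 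I expect the main obstacle to be the bookkeeping of the middle step: matching the rigid sign-carrying convention of the diagrams to the basis-indexing of the seminormal modules across both positive and negative crossings, and in particular proving cleanly that S2 together with the return condition forces the identity permutation P2 rather than merely a sign-pattern-preserving permutation. The specialization of the weights and the final normalization are recollections, but care is needed to confirm that no residual exponent-sum factor survives at $z=q^{-1}$.
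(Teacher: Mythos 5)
Your proposal is correct, and its core --- reading each state $(\mb{x},\e)$ as one term in the expansion of a diagonal matrix entry of the seminormal hook representation, checking that S1, S2, P1, P2 cut out exactly the surviving terms, and concluding $A(w)=\frac1{[n]}\sum_{\ell=0}^{n-1}(-1)^\ell\chi_\ell(\varphi(w))$ --- is exactly the paper's argument; indeed your justification of P2 (like-signed strands never cross, so each sign class keeps its left-to-right order, so a class-preserving permutation is the identity) is spelled out more fully than in the paper, which only remarks that diagonal entries force the identity permutation. Where you diverge is the other half: the paper simply quotes Jones's closed formula (7.2), $\Delta(L) = (-1)^{n-1}(1/q)^{(e-n+1)/2}\frac{1-q}{1-q^n}\sum_k(-1)^k\chi_{n-1-k}(\psi(w))$, and performs the change of variables $q=v^2$, $T_r=vH_r$ (which is precisely where the exponent-sum factor $v^{-e}$ cancels against $\psi(w)=v^e\varphi(w)$), whereas you propose to re-derive that formula from the Markov trace expansion (\ref{jones}) by showing the Ocneanu weights vanish on non-hook shapes at $z\to q^{-1}$ and equal $(-1)^\ell/[n]$ on hooks, and then to pin the normalization via Markov invariance and the skein relation. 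That route works --- the vanishing on non-hooks is the standard fact that a super-Schur function in one variable of each kind is supported on hooks --- but as written it leaves the weight evaluation and the cancellation of the residual $(\sqrt\lambda)^e$ framing factor as acknowledged ``recollections to verify''; those verifications are genuinely needed (the cancellation only happens in the $H$-normalization, not the $T$-normalization), and carrying them out essentially reproduces Jones's (7.2). So your approach buys a more self-contained derivation at the cost of redoing a computation the paper outsources to a single citation; either way the theorem follows, and the polynomiality in $v-v^{-1}$ is, in both treatments, inherited from the Conway normalization of $\Delta$ rather than proved directly from the state-sum.
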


\begin {proof}
To avoid confusion when switching between the generators
$H_r$ and $T_r = vH_r$ of $\mc{H}_n$, let us write
$\varphi: B_n \onto \mc{H}_n^\times$ for the group homomorphism 
given by $\varphi(\s_r) = H_r$ and $\psi: B_n \onto \mc{H}_n^\times$
for the one with $\psi(\s_r) = T_r$.
Formula (7.2) in \cite {Jones} gives the Alexander polynomial for a link $L$ as
\begin{equation}
    \Delta(L) = (-1)^{n-1} \left( \frac 1 q \right)^{(e-n+1)/2} \frac {1-q} {1-q^n} \sum_{k=0}^{n-1} (-1)^k \chi_{n-1-k} \bigl( \psi(w) \bigr)
\end{equation}
where $\chi_\ell = \tr \circ  \rho_{\ell}$ 
is the character of $\mc{H}_n$ arising from the irreducible representation $\rho_\ell: \mc{H}_n \to \End(S(\l_\ell))$ indexed by the hook partition $(n-\ell, 1^\ell)$, and $e$ is the exponent sum of $w$.
Put $q = v^2$ and $T_r=vH_r$ for each $r$, so that $\psi(w) = v^e \varphi(w)$.  Reindex the sum over $\ell = n-1-k$ to get
\begin{eqnarray}
    \Delta(L) &=& (-1)^{n-1} \left( \frac 1 v \right)^{e-n+1} \frac {1-v^2} {1-v^{2n}} \sum_{\ell=0}^{n-1} (-1)^{n-1-\ell} \chi_\ell \bigl( v^e \varphi(w) \bigr) \nonumber \\
    &=&  \frac 1 {[n]} \sum_{\ell=0}^{n-1} (-1)^\ell \chi_\ell \bigl( \varphi(w) \bigr).
\end{eqnarray}

Now we compute $\chi_\ell$ by using the semi-normal form for
$S(\l_\ell)$.
The action of the generators $H_r$, $H_r^{-1}$ on $x_{\e}$ from Theorem~\ref {signs irrep thm} are pictured in (\ref{s-n signs}).
\begin {equation} \label {s-n signs}
\begin {array} {c c c}
\vcenter{
\xy
(0,0);(10,10)**@{-},
(10,0);(6,4)**@{-},
(4,6);(0,10)**@{-},
(0,-3)*{_r},
(10,-3)*{_{r+1}},
(0,13)*{}
\endxy
} \quad &\longmapsto& \quad a_r(\e) \vcenter{
\xy
(0,0);(0,10)**@{-},
(10,0);(10,10)**@{-},
(0,-3)*{\e_r},
(0,13)*{\e_r},
(10,-3)*{\e_{r+1}},
(10,13)*{\e_{r+1}}
\endxy
} \quad + \quad b_r(\e) \vcenter{
\xy
(0,0);(10,10)**@{-},
(10,0);(0,10)**@{-},
(0,-3)*{\e_r},
(10,13)*{\e_r},
(10,-3)*{\e_{r+1}},
(0,13)*{\e_{r+1}}
\endxy
} \\
&& \\
\vcenter{
\xy
(0,0);(4,4)**@{-},
(6,6);(10,10)**@{-},
(10,0);(0,10)**@{-},
(0,-3)*{_r},
(10,-3)*{_{r+1}},
(0,13)*{}
\endxy
} \quad &\longmapsto& \quad \bar a_r(\e) \vcenter{
\xy
(0,0);(0,10)**@{-},
(10,0);(10,10)**@{-},
(0,-3)*{\e_r},
(0,13)*{\e_r},
(10,-3)*{\e_{r+1}},
(10,13)*{\e_{r+1}}
\endxy
} \quad + \quad b_r(\e) \vcenter{
\xy
(0,0);(10,10)**@{-},
(10,0);(0,10)**@{-},
(0,-3)*{\e_r},
(10,13)*{\e_r},
(10,-3)*{\e_{r+1}},
(0,13)*{\e_{r+1}}
\endxy
}
\end {array}
\end {equation}
Moreover, if $r = 1$, the second term on the right hand side should be omitted.
Only diagonal entries of the matrix $\rho_\ell(\varphi(w))$ contribute to the trace.  Hence, for each $\ell$ we need only consider those permutation diagrams that represent the identity permutation and whose first strand goes straight through.  The theorem follows on comparing formulas (\ref {diag coeff}) and (\ref {off-diag coeff}) with (\ref {scalars}).
\end {proof}

\section {Example} \label {sec ex}

Let's use the braid presentation $w = \s_2^{-1} \s_1 \s_2^{-1} \s_1$ for the figure-eight knot, pictured below with its six possible states.
\[
\vcenter{\xy 0;/r.15pc/:
(0,0);(14,14)**@{-},
(16,16);(20,20)**@{-},
(20,20);(20,30)**@{-},
(20,30);(10,40)**@{-},
(10,0);(6,4)**@{-},
(4,6);(0,10)**@{-},
(0,10);(0,20)**@{-},
(0,20);(14,34)**@{-},
(16,36);(20,40)**@{-},
(20,0);(20,10)**@{-},
(20,10);(6,24)**@{-},
(4,26);(0,30)**@{-},
(0,30);(0,40)**@{-}
\endxy} \qquad
\begin {array} {c | c | c c c}
\mb{x} & \e & M(w, \mb{x}, \e) & & \\
\hline
& & & & \\
\multirow {7} {*}
{\quad $\vcenter{\xy 0;/r.10pc/:
(0,0);(0,40)**@{-},
(10,0);(10,40)**@{-},
(20,0);(20,40)**@{-}
\endxy}$} \quad
& + \, + \, + & v^{-1} \cdot v \cdot v^{-1} \cdot v &=& 1 \\
&&&& \\
&+ \, + \, - & \frac {-v^2} {[2]} \cdot v \cdot \frac {-v^2} {[2]} \cdot v &=& \frac {v^6} {[2]^2} \\
&&&& \\
&+ \, - \, + & \frac {v^{-2}} {[2]} \cdot (-v^{-1}) \cdot \frac {v^{-2}} {[2]} \cdot (-v^{-1}) &=& \frac {v^{-6}} {[2]^2} \\
&&&& \\
&+ \, - \, - & (-v) \cdot (-v^{-1}) \cdot (-v) \cdot (-v^{-1}) &=& 1 \\
&&&& \\
\hline
& & & & \\
\multirow {3} {*}
{\quad $\vcenter{\xy 0;/r.10pc/:
(0,0);(0,40)**@{-},
(10,0);(10,10)**@{-},
(10,10);(20,20)**@{-},
(20,20);(20,30)**@{-},
(20,30);(10,40)**@{-},
(20,0);(20,10)**@{-},
(20,10);(10,20)**@{-},
(10,20);(10,30)**@{-},
(10,30);(20,40)**@{-}
\endxy}$} \quad
&+ \, + \, - & \frac {[3]} {[2]} \cdot (-v^{-1}) \cdot \frac {[1]} {[2]} \cdot v &=& \frac {-[3]} {[2]^2} \\
&&&& \\
&+ \, - \, + & \frac {[1]} {[2]} \cdot v \cdot \frac {[3]} {[2]} \cdot (-v^{-1}) &=& \frac {-[3]} {[2]^2}
\end {array}
\] \\

Now, we calculate the sum, minding the signs associated to each state and the global rescaling.
\begin {equation*}
A(w) = \dfrac {1} {[3]} \left( 2 - \dfrac {v^6 + v^{-6}} {[2]^2} + \dfrac {2 [3]} {[2]^2} \right) = -v^2+3-v^{-2} = 1 - (v-v^{-1})^2
\end {equation*}

\bibliographystyle {plain}
\bibliography {statesumnew}

\end {document}